\documentclass[12pt]{article}
\usepackage{a4wide,amssymb,amsmath,amsthm,graphicx}
\newtheorem{Thm}{Theorem}
\newtheorem{Lem}[Thm]{Lemma}
\newtheorem{Cor}[Thm]{Corollary}

\newcommand{\beq}{\begin{equation}}
\newcommand{\eeq}{\end{equation}}
\begin{document}
\title{The Combinatorics of the leading root of the partial theta function}
\author{Thomas Prellberg}
\date{\today}
\maketitle

\begin{abstract}
Recently Alan Sokal studied the leading root $x_0(q)$ of the partial theta function
$\Theta_0(x,q)=\sum\limits_{n=0}^\infty x^nq^{\binom n2}$, considered as a formal 
power series. He proved that all the coefficients of
$$-x_0(q)=1+q+2q^2+4q^3+9q^4+\ldots$$
are positive integers. I give here an explicit combinatorial interpretation of these
coefficients. More precisely, I show that $-x_0(q)$ enumerates rooted trees that are
enriched by certain polyominoes, weighted according to their total area.
\end{abstract}

Alan Sokal proved in \cite{Sokal2012} that the partial theta function \cite{Andrews2005,Andrews2009}
\beq
\label{theta}
\Theta_0(x,q)=\sum\limits_{n=0}^\infty x^nq^{\binom n2}
\eeq
admits a unique formal power series $\xi_0(q)\in R[[q]]$ satisfying $\Theta_0(-\xi_0(q),q)=0$, and that it has
strictly positive integer coefficients, 
\beq
\label{series}
\xi_0(q)=1+q+2q^2+4q^3+9q^4+21q^5+52q^6+133q^7+351q^8+948q^9+\ldots\;.
\eeq
I present here a combinatorial interpretation of these coefficients in terms of rooted trees with vertices enriched with a certain class of objects related to polyominoes; I will define these objects more carefully below. The enrichment is such that the out-degree of the vertices in the tree is matched by a particular geometric parameter of the polyominoes. I will show that in fact 
these combinatorial interpretations are not unique, and can be extended to an uncountable family
of different representations.

The partial theta function $\Theta_0(x,q)$ can be seen as a special case of the three-variable
Rogers-Ramanujan function
\beq
R(x,y,q)=\sum_{n=0}^\infty\frac{x^ny^{\binom n2}}{(1+q)(1+q+q^2)\ldots(1+q+\ldots+q^{n-1})}\;,
\eeq
which seems to have intriguing structural properties, albeit many of them unproved at present \cite{Sokal2012}. Clearly $\Theta_0(x,q)=R(x,q,0)$, and note also that $R(x,q,q)$ has a particularly simple product representation,
\beq
R(x,q,q)=\prod_{n=0}^\infty(1+x(1-q)q^n)\;,
\eeq
whence one can write all roots explicitly as $-x_n(q)=q^{-n}+q^{-n+1}+q^{-n+2}+\ldots$
and the corresponding counting problem becomes trivial. It is an intriguing open problem to better characterise the roots of $R(x,y,q)$ in general, and I present here the first step by providing a combinatorial interpretation of the leading root of $R(x,q,0)$.

The combinatorial interpretation of the leading root $x_0(q)$ of the partial theta function $\Theta_0(x,q)$ relies on the following characterisation of roots of the partial theta function (\ref{theta}), where the standard notation $(t;q)_n=\prod_{i=0}^{n-1}(1-tq^i)$ and $(t;q)_\infty=\prod_{i=0}^\infty(1-tq^i)$ is used.

\begin{Lem}[Eqns.~(3.2) and (4.9) in Sokal \cite{Sokal2012}]
\label{sokal}
The leading root $x_0(q)=-\xi_0(q)$ of the partial theta function $\Theta_0(x,q)$ satisfies
\beq
\label{root1}
\xi_0(q)=1+\sum_{n=1}^\infty\frac{q^n}{(q;q)_n(\xi_0(q)q;q)_{n-1}}
\eeq
and
\beq
\label{root2}
\xi_0(q)=1+\sum_{n=1}^\infty\frac{q^{n^2}\xi_0(q)^n}{(q;q)_n(\xi_0(q)q;q)_{n-1}}\;.
\eeq
\end{Lem}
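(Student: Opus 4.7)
\medskip
\noindent\textbf{Proof proposal.} My plan is to establish the stronger formal power series identities
\beq
1 + \sum_{n=1}^\infty \frac{q^n}{(q;q)_n (xq;q)_{n-1}} = x + \frac{\Theta_0(-x,q)}{(q;q)_\infty (xq;q)_\infty}
\eeq
and
\beq
1 + \sum_{n=1}^\infty \frac{q^{n^2} x^n}{(q;q)_n (xq;q)_{n-1}} = x + \frac{\Theta_0(-x,q)}{(xq;q)_\infty}\;,
\eeq
valid for $x$ treated as an indeterminate, with both sides viewed as formal power series in $q$. Specialising to $x=\xi_0(q)$ makes the last term on each right-hand side vanish, since $\Theta_0(-\xi_0,q)=0$, and this immediately gives (\ref{root1}) and (\ref{root2}).

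The strategy for both closed forms is identical. I would multiply through by $(xq;q)_\infty$, use $(xq;q)_\infty/(xq;q)_{n-1} = (xq^n;q)_\infty$ to bring the summands into a uniform shape, expand each $(xq^n;q)_\infty$ via Euler's identity $(z;q)_\infty = \sum_{k\geq0}(-z)^k q^{\binom{k}{2}}/(q;q)_k$, and interchange the order of summation in the resulting double sum.

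In the first case the inner sum is $\sum_{n\geq1} q^{n(k+1)}/(q;q)_n$, which Euler's identity $\sum_n t^n/(q;q)_n = 1/(t;q)_\infty$ evaluates to $(q;q)_k/(q;q)_\infty - 1$. Using $(x;q)_\infty=(1-x)(xq;q)_\infty$ together with $\sum_k(-x)^k q^{\binom{k}{2}}/(q;q)_k=(x;q)_\infty$, the claimed closed form drops out by elementary rearrangement.

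In the second case the analogous manipulation produces the double sum $\sum_{n,k\geq0}(-1)^k x^{n+k} q^{n^2+nk+\binom{k}{2}}/[(q;q)_n(q;q)_k]$. Setting $m=n+k$, the exponent simplifies by the identity $n^2+nk+\binom{k}{2} = \binom{m}{2}+\binom{n+1}{2}$ (with $k=m-n$), and the inner sum becomes $\sum_{n=0}^m \binom{m}{n}_q(-1)^{m-n}q^{\binom{n+1}{2}}=(-1)^m(q;q)_m$ by the $q$-binomial theorem $(z;q)_m=\sum_n\binom{m}{n}_q(-z)^n q^{\binom{n}{2}}$ specialised at $z=q$. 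The factors of $(q;q)_m$ cancel and what remains is $\sum_m (-x)^m q^{\binom{m}{2}}=\Theta_0(-x,q)$. The main obstacle is spotting the exponent identity above; once it is in hand, the $q$-binomial evaluation closes the argument cleanly for both identities.
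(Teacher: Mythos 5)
Your proposal is correct, and every step checks out on inspection: the two closed-form identities you state are, after the substitution $x\mapsto -x$, precisely the identities the paper's proof arrives at; the exponent identity $n^2+nk+\binom{m-n}{2}=\binom{m}{2}+\binom{n+1}{2}$ for $m=n+k$ is valid; and the $q$-binomial theorem at $z=q$ does give $\sum_{n=0}^m\binom{m}{n}_q(-1)^{m-n}q^{\binom{n+1}{2}}=(-1)^m(q;q)_m$, so the double sum collapses to $\Theta_0(-x,q)$ as claimed. The difference from the paper is one of provenance rather than architecture. The paper simply quotes Sokal's Lemma 2.1 for the two expansions $\Theta_0(x,q)=(q;q)_\infty(-x;q)_\infty\sum_{n\ge0}q^n/[(q;q)_n(-x;q)_n]=(-x;q)_\infty\sum_{n\ge0}q^{n^2}(-x)^n/[(q;q)_n(-x;q)_n]$, and then performs only the trivial rearrangement $(-x;q)_n=(1+x)(-xq;q)_{n-1}$ to isolate the $1+x$ term before setting $x=-\xi_0(q)$. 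You instead prove the already-rearranged identities from scratch by clearing $(xq;q)_\infty$, Euler-expanding $(xq^n;q)_\infty$, and exchanging summations; in the second case this amounts to rederiving the Durfee-square--type identity that underlies Sokal's lemma. What your route buys is self-containedness --- the lemma no longer rests on an external citation --- at the cost of a page of $q$-series manipulation; what the paper's route buys is brevity. If you write yours up in full, two small points deserve a sentence each: the exchanges of summation are legitimate in $R[x][[q]]$ because the $q$-order of the $(n,k)$ term grows without bound in both indices, so each coefficient of $q^N$ receives only finitely many contributions; and the final specialisation $x=\xi_0(q)$ (which has nonzero constant term) is a valid substitution there, with $(q;q)_\infty$ and $(\xi_0(q)q;q)_\infty$ becoming units in $R[[q]]$, so dividing by them and killing the $\Theta_0(-\xi_0(q),q)$ term is justified.
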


\begin{proof}
Using a result of Sokal \cite[Lemma 2.1]{Sokal2012}, the partial theta function (\ref{theta}) satisfies
\begin{align}
\Theta_0(x,q)&=(q;q)_\infty(-x;q)_\infty\sum_{n=0}^\infty\frac{q^n}{(q;q)_n(-x;q)_n}\label{first}\\
&=(-x;q)_\infty\sum_{n=0}^\infty\frac{q^{n^2}(-x)^n}{(q;q)_n(-x;q)_n}\label{second}
\end{align}
as formal power series. From Eqn.~(\ref{first}) it follows that
\begin{multline}
\Theta_0(x,q)=(q;q)_\infty(-x;q)_\infty\sum_{n=0}^\infty\frac{q^n}{(q;q)_n(-x;q)_n}\\
=(q;q)_\infty(-xq;q)_\infty\left[1+x+\sum_{n=1}^\infty\frac{q^n}{(q;q)_n(-xq;q)_{n-1}}\right]\;.
\end{multline}
Hence $\Theta_0(-\xi_0(q),q)=0$ implies that
\beq
0=1-\xi_0(q)+\sum_{n=1}^\infty\frac{q^n}{(q;q)_n(\xi_0(q)q;q)_{n-1}}\;,
\eeq
and Eqn.~(\ref{root1}) follows. Similarly, from Eqn.~(\ref{second}) it follows that
\begin{multline}
\Theta_0(x,q)=(-x;q)_\infty\sum_{n=0}^\infty\frac{q^{n^2}(-x)^n}{(q;q)_n(-x;q)_n}\\
=(-xq;q)_\infty\left[1+x+\sum_{n=1}^\infty\frac{q^{n^2}(-x)^n}{(q;q)_n(-xq;q)_{n-1}}\right]\;.
\end{multline}
Hence $\Theta_0(-\xi_0(q),q)=0$ implies that
\beq
0=1-\xi_0(q)+\sum_{n=1}^\infty\frac{q^{n^2}\xi_0(q)^n}{(q;q)_n(\xi_0(q)q;q)_{n-1}}\;,
\eeq
and Eqn.~(\ref{root2}) follows.
\end{proof}

The functional equations (\ref{root1}) and (\ref{root2}) are, broadly speaking, similar
to those satisfied by different families of combinatorial trees. The interested reader
should examine Chapter VII.4 of \cite{Flajolet2009} and the more general theory of species \cite{BergeronLabelleLeroux1998} (due to Joyal \cite{Joyal1981}), for many examples of this.

\begin{figure}[t]
\begin{center}\includegraphics[width=0.8\textwidth]{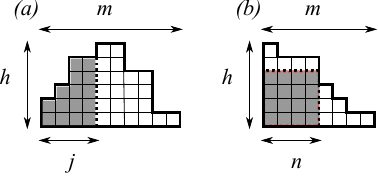}
\begin{minipage}{0.9\textwidth}\caption{(a) An example of a stack polyomino of width $m=10$,
rise $j=4$, height $h=6$ and area $A=37$. Columns associated to the rise are shaded in grey. 
(b) An example of a Ferrers diagram of width $m=8$, height $h=6$, and area $A=28$. The associated
Durfee square is shaded in grey, with a red border. 
\label{figstatferr}
}\end{minipage}\end{center}
\end{figure}

A combinatorial interpretation of Eqn.~(\ref{root1}) in Lemma 1 relates $\xi_0(q)$ to stack polyominoes. A {\em stack polyomino} is defined by a unimodal sequence of positive integers 
$h_1\leq h_2\leq h_3\leq\ldots\leq h_j<h_{j+1}\geq h_{j+2}\geq\ldots\geq h_m$. 
Given such a sequence, call $m$ the width, $j$ the rise ($j$ may be zero),
$h\equiv h_{j+1}$ the height, and $A=\sum_{i=1}^mh_i$ the area of the polyomino.
We draw a stack polyomino as a set of columns of height $h_1,\ldots,h_m$ from left to right;
an example is given in Figure \ref{figstatferr}(a). Viewed in such a way, a unimodal sequence is 
equivalent to a stack polyomino of horizontal rows with no overhangs, hence the name.

Similarly, a combinatorial interpretation of Eqn.~(\ref{root2}) in Lemma 1 will relate $\xi_0(q)$
to Ferrers diagrams. A \emph{Ferrers diagram} is defined by a decreasing sequence of positive integers $m_1\geq m_2\geq m_3\geq\ldots\geq m_h$.
Given such a sequence, call $h$ the height, $m=m_1$ the width, and $A=\sum_{i=1}^hm_i$ the area of the Ferrers diagram. We choose to draw Ferrers diagrams as a set of rows of lengths $m_1,\ldots, m_h$ from bottom to top; an example is shown in Figure \ref{figstatferr}(b). (The example shown in Figure \ref{figstatferr}(b) also has $m_n=n$ for some integer $n$, a property that will become relevant later.)

The next lemma is an extension of results given in \cite{PrellbergOwczarek1994}. For general techniques for enumerating column-convex polyominoes, see also \cite{BousquetMelou1996}. 

\begin{Lem}
The generating function $G(x,y,a,q)$ of stack polyominoes enumerated with respect to width ($x$), height ($y$), rise ($a$), and total area ($q$), is given by
\beq
G(x,y,a,q)=\sum_{n=1}^\infty\frac{x(yq)^n}{(xq;q)_n(axq;q)_{n-1}}\;.
\eeq
\end{Lem}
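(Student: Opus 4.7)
The plan is to decompose each stack polyomino according to its peak column. Given the defining inequalities, the column $h_{j+1}$ is the unique column that strictly exceeds all preceding columns and weakly exceeds all following ones; let $n=h_{j+1}$ denote its height. Fixing $n$, I would split each such polyomino into three independent pieces: the $j$ columns strictly to the left of the peak, the peak column itself, and the $m-j-1$ columns strictly to the right. Since these three pieces interact only through the shared parameter $n$, the generating function becomes a sum over $n\ge 1$ of a product of three factors.

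Computing each factor is a routine partition-generating-function calculation. The peak column contributes $xy^nq^n$ (one to width, $n$ to height, $n$ to area, nothing to the rise). Reading the left columns from left to right gives a weakly increasing sequence of parts in $\{1,\ldots,n-1\}$, i.e., a partition with parts bounded by $n-1$; weighting each part $i$ by $axq^i$ (width $x$, rise $a$, area $q^i$) gives
\beq
\prod_{i=1}^{n-1}\frac{1}{1-axq^i}=\frac{1}{(axq;q)_{n-1}}\;.
\eeq
Reading the right columns from right to left gives a partition with parts bounded by $n$, each part $i$ weighted by $xq^i$ (no rise contribution), contributing the factor $1/(xq;q)_n$. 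Multiplying the three factors and summing over $n\ge 1$ produces the stated formula.

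The only delicate point is the asymmetry in the inequalities at the peak: the strict $h_j<h_{j+1}$ forces the left parts to be at most $n-1$, producing $(axq;q)_{n-1}$, while the weak $h_{j+1}\ge h_{j+2}$ permits right parts equal to $n$, producing $(xq;q)_n$. This is the main thing to handle carefully, but it amounts to bookkeeping rather than a genuine obstacle; once the peak decomposition is set up and the boundary cases $j=0$ and $j=m-1$ (which correspond to the empty partition on one side) are checked against the convention $(\,\cdot\,;q)_0=1$, the identity follows.
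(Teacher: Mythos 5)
Your proof is correct, but it takes a genuinely different route from the paper. The paper peels off the bottom row: a stack of height $h>1$ is obtained from one of height $h-1$ by incrementing every column and appending height-one columns on either side, which yields the functional equation
\beq
G(x)=\frac{xyq}{1-xq}+\frac{y}{(1-xq)(1-axq)}\,G(xq)\;,
\eeq
and the stated sum then emerges by iteration, with the summation index $n$ counting the number of iterations (equivalently the height). You instead decompose vertically around the peak column: for fixed peak height $n$ the peak contributes $x(yq)^n$, the columns to its left form a weakly increasing sequence with parts in $\{1,\ldots,n-1\}$ each weighted $axq^i$, giving $1/(axq;q)_{n-1}$, and the columns to its right form a weakly decreasing sequence with parts in $\{1,\ldots,n\}$ each weighted $xq^i$, giving $1/(xq;q)_n$. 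You correctly identify the one delicate point, namely that the strict inequality $h_j<h_{j+1}$ versus the weak inequality $h_{j+1}\ge h_{j+2}$ is exactly what makes the peak (the first occurrence of the maximum) unique and produces the asymmetric indices $n-1$ and $n$ in the two Pochhammer symbols. Your approach has the advantage of giving each factor of the $n$-th summand an immediate combinatorial meaning with no iteration step; the paper's row-by-row construction is the one that generalises to wider classes of column-convex polyominoes (it is the standard technique of \cite{PrellbergOwczarek1994,BousquetMelou1996}) and is of a piece with the tree-building recursions used later in the paper. Either argument establishes the lemma.
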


\begin{proof}
Any stack polyomino of height $h>1$ is uniquely constructed by adding a row to the bottom of a stack polyomino of height $h-1$, as indicated in Figure \ref{figstack}. 

\begin{figure}[t]
\begin{center}\includegraphics[width=0.8\textwidth]{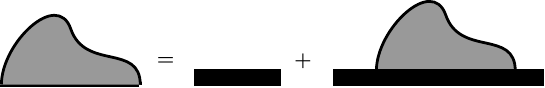}
\begin{minipage}{0.9\textwidth}\caption{Construction of stack polyominoes: a stack polyomino either has height one or can be obtained from a stack polyomino by appending a new row to the bottom of the stack polyomino.
\label{figstack}
}\end{minipage}\end{center}
\end{figure}

This construction leads to a functional equation for the generating function $G(x,y,a,q)$. Stack polyominoes of height one are
counted by 
\beq
xyq+x^2yq^2+x^3yq^3+\ldots=\frac{xyq}{1-xq}\;.
\eeq
Adding a row to the bottom of a stack polyomino of height $h-1$ is equivalent
to increasing each column height of that polyomino by one and adding rows of non-negative length to each side. Increasing each column 
height corresponds to a change of $G(x,y,a,q)$ to 
\beq
yG(xq,y,a,q)\;,
\eeq 
Adding rows to the left and right corresponds to multiplication with
\beq
1+axq+a^2x^2q^2+\ldots=\frac1{1-axq}
\eeq
and
\beq
1+xq+x^2q^2+\ldots=\frac1{1-xq}\;,
\eeq
respectively. Taken together, one finds that the generating
function $G(x)=G(x,y,a,q)$ satisfies the functional equation
\beq
G(x)=\frac{xyq}{1-xq}+\frac y{(1-xq)(1-axq)}G(xq)\;.
\eeq
Iteration of this functional equation leads to the desired result.
\end{proof}

Combining the functional equation (\ref{root1}) of Lemma 1 and the combinatorial statement of Lemma 2 provides a connection of the root of the partial theta function with the enumeration of stack polyominoes.

\begin{Cor}
The leading root $x_0(q)=-\xi_0(q)$ of the partial theta function $\Theta_0(x,q)$ satisfies
\beq
\xi_0(q)=F(\xi_0(q),q)\;,
\eeq
where $F(a,q)=1+G(1,1,a,q)$ is the generating function of stack polyominoes augmented by the `empty polyomino' of weight one, enumerated with respect to the rise ($a)$ and total area ($q$).
\end{Cor}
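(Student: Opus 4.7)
The proof is essentially an algebraic matching exercise between the formula produced by Lemma 2 and the functional equation (\ref{root1}) of Lemma 1, so my plan is to simply verify that the two sides agree after the appropriate specialisation.

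First I would specialise the generating function $G(x,y,a,q)$ of Lemma 2 at $x=1$ and $y=1$. Substituting into
$$G(x,y,a,q)=\sum_{n=1}^\infty\frac{x(yq)^n}{(xq;q)_n(axq;q)_{n-1}}$$
yields directly
$$G(1,1,a,q)=\sum_{n=1}^\infty\frac{q^n}{(q;q)_n(aq;q)_{n-1}}\;,$$
so that
$$F(a,q)=1+\sum_{n=1}^\infty\frac{q^n}{(q;q)_n(aq;q)_{n-1}}\;.$$

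Next I would substitute $a=\xi_0(q)$ into this expression. This produces
$$F(\xi_0(q),q)=1+\sum_{n=1}^\infty\frac{q^n}{(q;q)_n(\xi_0(q)q;q)_{n-1}}\;,$$
which is exactly the right-hand side of Eqn.~(\ref{root1}) in Lemma \ref{sokal}. Hence $\xi_0(q)=F(\xi_0(q),q)$, as claimed.

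There is no real obstacle here; the only point worth stressing is the interpretation of the additive ``$+1$'' in $F(a,q)=1+G(1,1,a,q)$ as the contribution of the empty polyomino, which matches the constant term $1$ appearing on the right-hand side of Eqn.~(\ref{root1}). The specialisations $x=1$ and $y=1$ correspond combinatorially to forgetting the width and height markings, retaining only the rise (tracked by $a$) and the area (tracked by $q$), which is precisely the enumeration described in the statement of the corollary.
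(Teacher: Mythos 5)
Your proposal is correct and follows exactly the same route as the paper's own proof: specialise $G(x,y,a,q)$ at $x=y=1$, substitute $a=\xi_0(q)$, and match the result against Eqn.~(\ref{root1}) of Lemma~\ref{sokal}. The paper merely compresses this into a single displayed identity, so your version is just a more explicit write-up of the same argument.
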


\begin{proof}
Using Lemma 1 and Lemma 2 one finds immediately
\beq
\xi_0(q)=1+\sum_{n=1}^\infty\frac{q^n}{(q;q)_n(\xi_0(q)q;q)_{n-1}}=1+G(1,1,\xi_0(q),q)\;.
\eeq
\end{proof}

For an interpretation of this result, I now turn to the theory of species \cite{BergeronLabelleLeroux1998},
and in particular to the combinatorics of Lagrange inversion in the context of combinatorial functional equations
\cite[Chapter 3]{BergeronLabelleLeroux1998}. The generating functions aspect of this theory is
very well explained in \cite[Section VII.4]{Flajolet2009}. 

Given a combinatorial species of structures $R$, an \emph{$R$-enriched rooted tree} on a finite set $U$ is the data of
(i) an arbitrary rooted tree on $U$, and (ii) an $R$-structure on the fiber of each vertex $u\in U$, in
this rooted tree. Here, the \emph{fiber} of a vertex $u$ refers to the (possibly empty) set of immediate successors of $u$, when all edges of the rooted tree are oriented away from the root. The out-degree of $u$ is the cardinality of its fiber.

In other words, an $R$-enriched rooted tree is an ordered rooted tree for which each vertex with out-degree $d$ is decorated by an $R$-structure on a set of size $d$.

I shall make use of the following result.

\begin{Lem}[Theorem 2, Section 3.1 of \cite{BergeronLabelleLeroux1998}]
Let $R$ be a species of structures. Then the species ${\cal A}_R$ of $R$-enriched rooted trees is uniquely determined,
up to isomorphism, by the combinatorial equation
\beq
{\cal A}_R=X\cdot R({\cal A}_R)\;.
\eeq
\end{Lem}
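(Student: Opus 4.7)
The plan is to exhibit a natural bijective decomposition that realises the equation ${\cal A}_R=X\cdot R({\cal A}_R)$, and then to argue uniqueness by a species-level induction on the size of the underlying set.

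For the decomposition, given an $R$-enriched rooted tree $t$ on a finite set $U$, I would identify the root $r\in U$, the fiber $F\subseteq U\setminus\{r\}$, the decorating structure $\rho\in R[F]$, and, for each child $c\in F$, the subtree hanging below $c$, whose vertex set $U_c\subseteq U\setminus\{r\}$ carries an induced element $t_c\in{\cal A}_R[U_c]$. The blocks $\{U_c\}_{c\in F}$ form a partition of $U\setminus\{r\}$, so these data assemble into exactly one element of $(X\cdot R({\cal A}_R))[U]$: a distinguished singleton $\{r\}$ (the $X$-factor) together with a partition of the remaining vertices bearing an $R$-structure on its blocks and an ${\cal A}_R$-structure on each block. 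The inverse reattaches the $t_c$ below a fresh root $r$ according to $\rho$. Naturality under a relabelling bijection $\sigma:U\to U'$ is immediate, since every ingredient (root, fiber, partition, $R$-structure, subtrees) transports covariantly, so this defines an isomorphism of species $\Phi:{\cal A}_R\to X\cdot R({\cal A}_R)$.

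For uniqueness, let ${\cal B}$ be any species equipped with an isomorphism $\Psi:{\cal B}\to X\cdot R({\cal B})$. I would construct a natural isomorphism $\phi:{\cal A}_R\to{\cal B}$ by induction on $n=|U|$, starting with the vacuous case $U=\emptyset$ (both species are empty there, since rooted trees require a root). For $n\geq 1$, observe that both $\Phi_U$ and $\Psi_U$ decompose $U$-structures into substructures whose supports lie in $U\setminus\{r\}$, hence on sets of size at most $n-1$. Assuming $\phi$ has already been constructed naturally on all sets of strictly smaller cardinality, set
\[
\phi_U\;=\;\Psi_U^{-1}\circ\bigl(\mathrm{id}_X\cdot R(\phi)\bigr)\circ\Phi_U,
\]
where $R(\phi)$ denotes the functorial action of $R$ applied blockwise. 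Each factor is a bijection, so $\phi_U$ is, and naturality under relabellings follows from naturality of $\Phi$, $\Psi$, and the inductive $\phi$.

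The main obstacle is species-level bookkeeping rather than any deep combinatorics: one must verify that the recursive definition of $\phi_U$ is coherent under arbitrary bijections $U\to U'$, which reduces to the functoriality of the operations $X\cdot(-)$ and $R$. The underlying reason everything works is that the endofunctor $S\mapsto X\cdot R(S)$ has vanishing constant term, so any fixed point is determined order by order in the cardinality filtration; the induction above merely makes this precise. The cited source of Bergeron, Labelle and Leroux carries out this argument in full rigour.
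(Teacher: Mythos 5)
The paper does not prove this lemma at all: it is quoted verbatim as Theorem 2 of Section 3.1 in Bergeron--Labelle--Leroux and used as a black box, so there is no in-paper argument to compare yours against. Your proof is essentially the standard one from that reference, and it is correct: the canonical decomposition of an enriched rooted tree into its root, the $R$-structure on the fiber, and the subtrees rooted at the children gives the natural isomorphism ${\cal A}_R\cong X\cdot R({\cal A}_R)$, and uniqueness follows by induction on $|U|$ because the $X$-factor strips off one element, so the right-hand side only refers to structures on strictly smaller sets. Two small points you gloss over but which are easily repaired: first, for the substitution $R({\cal B})$ to be well-defined one needs ${\cal B}[\emptyset]=\emptyset$, which for an arbitrary fixed point ${\cal B}$ follows from $\Psi_\emptyset$ being a bijection onto $(X\cdot R({\cal B}))[\emptyset]=\emptyset$ rather than from ``trees need a root''; second, the blocks in the partition underlying $R({\cal A}_R)$ must be non-empty (guaranteed by the same fact), which is what makes each $U_c$ a proper non-empty subset and the induction well-founded. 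With those caveats your argument is a complete and faithful rendering of the cited proof.
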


\begin{figure}[t]
\begin{center}\includegraphics[width=0.4\textwidth]{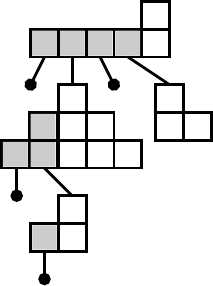}
\begin{minipage}{0.9\textwidth}\caption{An example of an $S_q$-enriched rooted tree with 8 vertices and total area 21. Note that each column of the rise of the stack polyomino (shown in grey) is associated with a successor vertex in the tree. A leaf of the tree can be decorated by a stack polyomino of rise zero (a Ferrers diagram), or by an empty stack.
\label{fig2}
}\end{minipage}\end{center}
\end{figure}

I now consider the species $S_q$ of stack polyominoes augmented by the `empty polyomino`, weighted according to their area by the generating variable $q$, and with size given by the rise of the stack polyomino. $S_q$-enriched rooted trees are ordered rooted trees such that each vertex of the tree with out-degree $d$ is decorated by a stack polyomino with rise $d$, and the weight of the tree is given by $q^A$ where $A$ is the sum of the areas of all of these polyominoes. Figure \ref{fig2} shows an example of an $S_q$-enriched rooted tree.

\begin{figure}[t]
\begin{center}\includegraphics[width=0.5\textwidth]{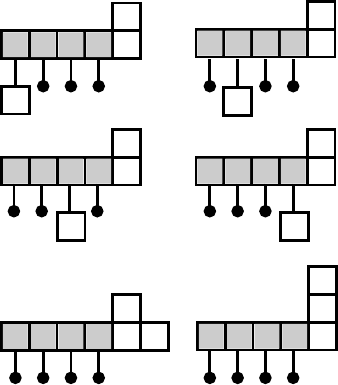}
\begin{minipage}{0.9\textwidth}\caption{All six $S_q$-enriched rooted trees with 5 vertices and total area 7. 
\label{figexample1}
}\end{minipage}\end{center}
\end{figure}

I am now able to state the first theorem of this paper, which gives an explicit combinatorial interpretation of the coefficients of $\xi_0(q)$. Note that this implies immediately that these
coefficients are positive integers.

\begin{Thm}
\label{thmA}
Let $S_q$ be the species of stack polyominoes augmented by the `empty polyomino', weighted by area ($q$), with size given by the rise.
Then $\xi_0(q)$ enumerates $S_q$-enriched rooted trees, weighted 
with respect to the total area of the stack polyominoes at the vertices of the tree.
\end{Thm}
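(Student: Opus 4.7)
The plan is to combine Lemma 3 with the Corollary to obtain a fixed-point equation for the generating function of $S_q$-enriched rooted trees, and then invoke uniqueness of the formal power series solution to identify this generating function with $\xi_0(q)$. By the Corollary, $F(a,q) = 1 + G(1,1,a,q)$ is precisely the generating function of the species $S_q$, with $a$ marking the rise and $q$ the area. Applying Lemma 3 to $R = S_q$ yields the combinatorial identity $\mathcal{A}_{S_q} = X \cdot S_q(\mathcal{A}_{S_q})$, which, on passing to generating functions with $x$ marking vertices and $q$ marking total area, becomes $T(x,q) = x\, F(T(x,q), q)$. Specialising to $x = 1$ and writing $T(q) := T(1,q)$ yields $T(q) = F(T(q), q)$, the same equation satisfied by $\xi_0(q)$.

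To conclude I would check that this shared equation determines a unique formal power series solution. Since every non-empty stack polyomino has area $\geq 1$, we have $F(a, 0) = 1$, so iterating the map $U \mapsto F(U, q)$ starting from $U = 1$ yields a $q$-adically Cauchy sequence stabilising to a unique fixed point; both $T(q)$ and $\xi_0(q)$ must therefore coincide with this limit. A preliminary verification is that $T(1,q)$ is genuinely a formal power series in $q$, i.e., that for each area $A$ only finitely many $S_q$-enriched rooted trees contribute. This follows from the stronger bound that a stack polyomino of rise $d \geq 1$ has area at least $d + 2$: in any tree of total area $A$, both the number of internal vertices and the total out-degree are then bounded by $A$, leaving only finitely many possible trees.

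The main obstacle is really conceptual rather than technical: one has to align the ``ordered rooted tree'' picture of Lemma 3 with the concrete decorated-tree picture of the theorem. The ordering of the children at each vertex is induced by the left-to-right ordering of the rise columns of the stack polyomino at that vertex, and this is precisely what makes the species-theoretic equation $\mathcal{A}_{S_q} = X \cdot S_q(\mathcal{A}_{S_q})$ correspond combinatorially to the decorated-tree objects enumerated in the theorem (compare Figure~\ref{fig2}). Once this identification is in place, the equality $T(q) = \xi_0(q)$ is immediate.
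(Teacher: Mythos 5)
Your proposal is correct and follows essentially the same route as the paper: apply the enriched-tree species equation ${\cal A}_{S_q}=X\cdot S_q({\cal A}_{S_q})$, pass to generating functions, set the vertex variable to $1$, and match the resulting fixed-point equation with the one for $\xi_0(q)$ from Corollary 3. Your additional verification that the equation $U=F(U,q)$ has a unique formal power series solution (via $F(a,0)=1$ and the fact that positive rise forces positive area, so the iteration is $q$-adically contracting) is a point the paper leaves implicit, and it is needed to justify the identification $\xi_0(q)=A(1,q)$.
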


\begin{proof}
The species ${\cal A}_{S_q}$ of $S_q$-enriched rooted trees satisfies 
${\cal A}_{S_q}=X\cdot S_q({\cal A}_{S_q})$.
Correspondingly, its generating function $A(t,q)$ satisfies
\beq
A(t,q)=tF(A(t,q),q)\;,
\eeq
where $t$ is the generating variable for the number of vertices in the tree. For $t=1$, this equation reduces to $A(1,q)=F(A(1,q),q)$, whence one identifies $\xi_0(q)=A(1,q)$ by Corollary 3.
\end{proof}

The generating function $A(t,q)$ is a refinement of $\xi_0(q)$, enumerating enriched trees with respect to area and number of vertices, which could be studied in its own right. One computes easily
\begin{multline}
A(t,q)=t+tq+2tq^2+(t^2+3t)q^3+(t^3+3t^2+5t)q^4+(t^4+4t^3+9t^2+7t)q^5\\
+(t^5+5t^4+15t^3+20t^2+11t)q^6+(t^6+6t^5+23t^4+44t^3+44t^2+15t)q^7+\ldots\;.
\end{multline}
For example, the occurrence of the monomial $6t^5q^7$ in $A(t,q)$ indicates there are six $S_q$-enriched rooted trees with 5 vertices and total area 7. These are shown in Figure \ref{figexample1}.

As an immediate consequence of Theorem \ref{thmA}, one obtains a monotonicity result for the coefficients of $\xi_0(q)$.

\begin{Cor}
The coefficients of the power series $\xi_0(q)$ are monotonically increasing.
\end{Cor}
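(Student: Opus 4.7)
My plan is to construct an explicit area-incrementing injection $\Phi\colon\mathcal{T}_n\to\mathcal{T}_{n+1}$, where $\mathcal{T}_n$ denotes the set of $S_q$-enriched rooted trees with total area $n$. By Theorem \ref{thmA}, the existence of such an injection immediately yields $[q^n]\xi_0(q)\leq[q^{n+1}]\xi_0(q)$ for all $n\geq 0$.

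The idea is to modify only the stack polyomino at the root. Let $T\in\mathcal{T}_n$ have root of out-degree $d$, decorated (by the definition of $S_q$-enrichment) by a polyomino $P$ of rise $d$. Define $\Phi(T)$ to be the tree obtained from $T$ by replacing $P$ with a polyomino $\phi_d(P)$ of the same rise and area one larger, leaving every other vertex and decoration unchanged. The map $\phi_0$ sends the empty polyomino to the single-cell polyomino $(1)$; in all other cases, if $P=(h_1,\ldots,h_m)$ has rise $d$, then
\[
\phi_d(h_1,\ldots,h_m)=(h_1,\ldots,h_d,h_{d+1}+1,h_{d+2},\ldots,h_m),
\]
that is, one adds a single cell on top of the peak column.

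I would then verify two things. First, $\phi_d(P)$ is again a stack polyomino of rise $d$: incrementing $h_{d+1}$ by one turns the defining inequalities $h_d<h_{d+1}$ and $h_{d+1}\geq h_{d+2}$ into $h_d<h_{d+1}+1$ and $h_{d+1}+1\geq h_{d+2}$, both still valid, and introduces no new strict increase on the ascending side. In particular the out-degree constraint matching rise to fibre size is preserved, so the decorated tree $\Phi(T)$ is a valid element of $\mathcal{T}_{n+1}$. Second, $\Phi$ is injective: given $T'\in\Phi(\mathcal{T}_n)$, inspect the root's decoration $P'$; if $P'=(1)$ then $P$ was empty, otherwise the preimage is obtained by subtracting one from the $(d+1)$st column of $P'$. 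This subtraction produces a valid rise-$d$ polyomino precisely when $h'_{d+1}\geq h'_d+2$ and $h'_{d+1}>h'_{d+2}$, so the image of $\phi_d$ sits as a proper subset of the rise-$d$ polyominoes of area $n+1$, which is compatible with (and indeed suggests the size of) the gap in the monotonicity inequality.

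The only genuinely delicate step I anticipate is a clean case analysis for the rise-preservation claim, separating $d=0$ (which encompasses both the empty polyomino and arbitrary partitions) from $d\geq 1$ with the sub-cases $m=d+1$ and $m>d+1$. Everything else is pure bookkeeping on the tree structure, which is trivially preserved since $\Phi$ leaves the shape of the tree and all non-root decorations untouched.
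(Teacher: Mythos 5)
Your proposal is correct and follows essentially the same strategy as the paper: an explicit injection from $S_q$-enriched trees of area $A$ to those of area $A+1$ that alters only the root's stack polyomino by one cell in a rise-preserving way, with the empty polyomino handled as a base case. The only difference is cosmetic --- you place the extra cell atop the peak column $h_{d+1}$, whereas the paper appends a unit column to the right end of the bottom row; both manifestly preserve the rise and are invertible on their image.
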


\begin{proof}
Monotonicity follows from the existence of an injection of $S_q$-enriched rooted trees with total area $A$ to $S_q$-enriched rooted trees with total area $A+1$, which is defined as follows. 

The only $S_q$-enriched rooted tree with zero total area is a tree with a single vertex, enriched
by the empty stack polyomino. Map this tree to an $S_q$-enriched rooted tree with total area one by enriching its vertex 
by a square and appending a leaf enriched by an empty stack. 

Now consider an $S_q$-enriched rooted tree with non-zero total area $A$. Then the root of this tree is 
enriched by a stack polyomino of non-zero area. Map this tree to an $S_q$-enriched rooted tree with total area $A+1$
by appending a square to the right of the bottom row of the stack polyomino at its root. This increases its area by 
one, and as this operation does not affect the rise of this stack polyomino, the resulting tree 
is again an $S_q$-enriched rooted tree.

Clearly, different $S_q$-enriched trees with total area $A$ get mapped to different $S_q$-enriched trees with total area $A+1$, hence this mapping is injective.
\end{proof}

A combinatorial interpretation of Eqn.~(\ref{root2}) in Lemma 1 provides another description of $\xi_0(q)$, relating it to partitions of integers with the
particular property that there exists an integer $n$ such that the $n$-th part 
of the partition has exactly size $n$. 

We remind that the generating function for Ferrers diagrams enumerated with respect to width ($x$), height ($y$), and area ($q$) is given by
\beq
\label{twoferrersums}
H(x,y,q)=\sum_{n=1}^\infty\frac{x^n}{(yq;q)_n}=\sum_{n=1}^\infty\frac{(xy)^nq^{n^2}}{(xq;q)_n(yq;q)_n}\;.
\eeq
Here, the first expression is obtained by summing over all columns of width $n$, and the second expression is obtained by constructing Ferrers diagrams by adding Ferrers diagrams to the top and right of a square of size $n$, which is also known as the Durfee square \cite{Stanley2000} associated to that Ferrers diagram. A close inspection of Eqn.~(\ref{root2}) indicates that a modification of the second sum in Eqn.~(\ref{twoferrersums}) is needed, using a construction where the single square immediately to the right of the top row of the Durfee square is absent from the Ferrers diagram. This condition is equivalent to saying that the $n$-th largest row of the Ferrers diagram has length $n$ for some positive integer $n$.

\begin{Lem}
The generating function $\tilde G(x,y,q)$ of Ferrers diagrams with $n$-th largest row having length $n$ for some positive integer $n$, enumerated with respect to width ($x$), height ($y$), and total area ($q$), is given by 
\beq
\tilde G(x,y,q)=\sum_{n=1}^\infty\frac{(xy)^nq^{n^2}}{(yq;q)_n(xq;q)_{n-1}}\;.
\eeq
\end{Lem}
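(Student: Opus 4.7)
The plan is to prove this by a Durfee-square decomposition, in direct parallel to the second equality in Eqn.~(\ref{twoferrersums}), but with one modification imposed by the condition $m_n = n$.

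First I would argue that the integer $n$ in the condition is uniquely determined and in fact coincides with the side length of the Durfee square. Since the sequence $m_1 \geq m_2 \geq \ldots \geq m_h$ is weakly decreasing, if $m_n = n$ then $m_k \geq m_n = n > k$ for all $k < n$, while $m_k \leq m_n = n < k$ for all $k > n$; hence $n$ is the largest index with $m_n \geq n$, which is the standard definition of the Durfee square size, and no other index $k \neq n$ can satisfy $m_k = k$.

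Next, for each fixed $n \geq 1$, I would decompose a Ferrers diagram satisfying $m_n = n$ into three independent pieces: (i) the $n\times n$ Durfee square occupying the bottom-left corner; (ii) the sub-diagram sitting strictly above the Durfee square, consisting of rows $m_{n+1} \geq \ldots \geq m_h$, each of length at most $n$; and (iii) the sub-diagram strictly to the right of the Durfee square, consisting of the row-extensions $m_i - n$ for $i = 1, \ldots, n$. The condition $m_n = n$ is equivalent to saying that in piece (iii) the top row has length zero, so piece (iii) is a partition with at most $n-1$ nonzero parts. Pieces (ii) and (iii) can be chosen independently, and this decomposition is clearly a bijection onto the set of triples (Durfee square of size $n$, partition with parts $\leq n$, partition with at most $n-1$ parts).

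The generating function contributions are then read off in the standard way. The Durfee square contributes $x^n y^n q^{n^2}$. Piece (ii), a partition with parts bounded by $n$ with $y$ marking the number of parts (which equals the added height) and $q$ marking area, contributes $1/(yq;q)_n$. Piece (iii), by conjugation a partition with parts bounded by $n-1$ with $x$ marking the number of parts (which equals the added width, i.e.\ the length of the bottom row's extension), contributes $1/(xq;q)_{n-1}$. Summing the product over $n \geq 1$ yields the claimed formula.

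I do not anticipate any real obstacle; the only points requiring care are the bookkeeping of which variable ($x$ or $y$) tracks width versus height after conjugation, and making sure the index shift on piece (iii) is $n-1$ rather than $n$, which is precisely the combinatorial content of the condition $m_n = n$.
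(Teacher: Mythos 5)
Your proof is correct and follows essentially the same route as the paper: a Durfee-square decomposition in which the condition $m_n=n$ forces the piece to the right of the square to have at most $n-1$ nonzero rows, yielding the factor $1/(xq;q)_{n-1}$ in place of $1/(xq;q)_n$. Your explicit verification that the index $n$ is unique and coincides with the Durfee square size is a detail the paper leaves implicit, but the argument is otherwise identical.
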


\begin{proof}
Partitioning Ferrers diagrams by their associated Durfee square, the side length of which is given by
\beq
n=\max\{i:m_i\geq i\}\;,
\eeq
one obtains Ferrers diagrams for which the $n$-th largest row has precisely length $n$ by adding Ferrers diagrams of width $\leq n$ to the top of the square
and adding Ferrers diagrams of height $\leq n-1$ to the right of the square. 

The weight of a Durfee square of fixed side length $n$ is $(xy)^nq^{n^2}$. Adding Ferrers diagrams of height $\leq n-1$ to the right hand side of the square corresponds to
multiplication with
\beq
\frac1{1-xq^{n-1}}\cdot\frac1{1-xq^{n-2}}\cdot\ldots\cdot\frac1{1-xq^2}\cdot\frac1{1-xq}=\frac1{(xq;q)_{n-1}}\;.
\eeq
Here, a factor $1/(1-xq^k)$ corresponds to the addition of an arbitrary number of columns of height $k$, and clearly $k$ can range from $1$ to $n-1$. Similarly, adding
Ferrers diagrams of width $\leq n$ to the top of the square corresponds to multiplication
with
\beq
\frac1{1-yq^{n}}\cdot\frac1{1-yq^{n-1}}\cdot\ldots\cdot\frac1{1-yq^2}\cdot\frac1{1-yq}=\frac1{(yq;q)_{n}}\;.
\eeq
Here, a factor $1/(1-yq^k)$ corresponds to the addition of an arbitrary number of rows of width $k$, and clearly $k$ can range from $1$ to $n$. A Durfee square of fixed side length $n$ gives therefore rise to an associated generating function 
\beq
\tilde G_n(x,y,q)=\frac{(xy)^nq^{n^2}}{(yq;q)_n(xq;q)_{n-1}}\;,
\eeq
and summing over all side lengths $n\geq1$ gives
\beq
\tilde G(x,y,q)=\sum_{n=1}^\infty\tilde G_n(x,y,q)
\eeq
as desired.
\end{proof}

Combining the functional equation (\ref{root2}) of Lemma 1 and the combinatorial statement of Lemma 6 provides a connection of the root of the partial theta function with the enumeration of Ferrers diagrams.

To achieve this, I consider the species $F_q$ of Ferrers diagrams with $n$-th largest row having length $n$ for some integer $n$, 
augmented by the `empty polyomino', weighted according to their area by the generating variable $q$, and with size given by the 
width of the polyomino. $F_q$-enriched rooted trees are ordered rooted trees such that each vertex of the tree with out-degree $d$ 
is decorated by a Ferrers diagram of width $d$, and the weight of the tree is given by $q^A$ where $A$ is the sum of the areas of 
all of these polyominoes. Figure \ref{fig2a} shows an example of an $F_q$-enriched rooted tree.

\begin{Cor}
The leading root $x_0(q)=-\xi_0(q)$ of the partial theta function $\Theta_0(x,q)$ satisfies
\beq
\xi_0(q)=\tilde F(\xi_0(q),q)\;,
\eeq
where $\tilde F(a,q)=1+\tilde G(a,1,q)$ is the generating function of Ferrers diagrams with $n$-th largest row having length $n$ for some integer $n$, augmented by the `empty polyomino' and enumerated with respect to height ($a$) and total area ($q$).
\end{Cor}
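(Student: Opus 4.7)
My plan mirrors the derivation of Corollary~3 from Lemma~1 and Lemma~2, but using the second identity in Lemma~1 together with Lemma~6 in place of the first identity and Lemma~2. The proof is essentially a specialization-and-match argument, and no new combinatorial construction is required.

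First I would specialize the generating function $\tilde G(x,y,q)$ of Lemma~6 at $x=\xi_0(q)$ and $y=1$. Setting $y=1$ collapses $(yq;q)_n$ to $(q;q)_n$, while the prefactor $(xy)^n$ becomes $\xi_0(q)^n$ and $(xq;q)_{n-1}$ becomes $(\xi_0(q)q;q)_{n-1}$. The resulting series then matches, term-by-term, the sum appearing on the right-hand side of Eq.~(\ref{root2}).

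Next I would invoke Eq.~(\ref{root2}) of Lemma~1 to rewrite $\xi_0(q)-1$ as that very sum, and hence as $\tilde G(\xi_0(q),1,q)$. By the definition $\tilde F(a,q)=1+\tilde G(a,1,q)$, this rearranges immediately to $\xi_0(q)=\tilde F(\xi_0(q),q)$, as claimed.

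There is essentially no obstacle here: all of the combinatorial content is packaged into Lemma~6, and the remaining work is a bookkeeping check on $q$-Pochhammer symbols under the specialization. Indeed, the modified Durfee-square decomposition used in the proof of Lemma~6 was set up precisely so that the denominator $(yq;q)_n(xq;q)_{n-1}$ and numerator $(xy)^n q^{n^2}$ would reproduce the summand in Eq.~(\ref{root2}); this matching is what motivated the particular constraint ``$n$-th largest row has length $n$'' in the definition of $\tilde G$ in the first place.
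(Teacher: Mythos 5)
Your proposal is correct and is essentially identical to the paper's own proof: both simply specialize $\tilde G(x,y,q)$ at $x=\xi_0(q)$, $y=1$ and match the resulting series term-by-term with the sum in Eq.~(\ref{root2}). No further comment is needed.
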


\begin{proof}
Combining Lemma 1 and Lemma 6 one finds
\beq
\xi_0(q)=1+\sum_{n=1}^\infty\frac{\xi_0(q)^nq^{n^2}}{(q;q)_n(\xi_0(q)q;q)_{n-1}}=1+\tilde G(\xi_0(q),1,q)\;.
\eeq
\end{proof}

\begin{figure}[t]
\begin{center}\includegraphics[width=0.3\textwidth]{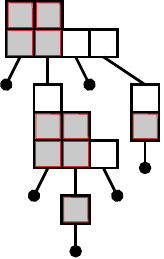}
\begin{minipage}{0.9\textwidth}\caption{An example of an $F_q$-enriched rooted tree with 10 vertices and total area 15. Note that the out-degree of each vertex is equal to the width of the Ferrers diagram associated with that vertex. The occurring Ferrers diagrams are such that their $n$-th largest row has length $n$ for some integer $n$ (the associated Durfee square is indicated in grey, with a red border). In particular, all leaves are decorated by empty Ferrers diagrams.
\label{fig2a}
}\end{minipage}\end{center}
\end{figure}

This leads to the second theorem of this paper, which gives a different explicit combinatorial interpretation of the coefficients of $\xi_0(q)$. (Of course this theorem also implies that these coefficients are positive integers.)

\begin{Thm}
\label{thmB}
Let $F_q$ be the species of Ferrers diagrams with $n$-th largest row having length $n$ for some integer $n$, weighted by area ($q$), with size 
given by the width of the Ferrers diagram, augmented by the `empty polyomino'.
Then $\xi_0(q)$ enumerates $F_q$-enriched rooted trees with respect to the total area of the Ferrers diagrams at the vertices of the tree.
\end{Thm}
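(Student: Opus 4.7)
The plan is to mirror the proof of Theorem \ref{thmA}, replacing the stack-polyomino ingredients (Lemma 2 and Corollary 3) by their Ferrers-diagram counterparts (Lemma 6 and Corollary 7). All of the required machinery is already in place, so the argument essentially consists of threading these earlier results through the species equation of Lemma 4.

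First I would apply Lemma 4 to the species $F_q$: the species ${\cal A}_{F_q}$ of $F_q$-enriched rooted trees is characterised up to isomorphism by the combinatorial equation
\[
{\cal A}_{F_q}=X\cdot F_q({\cal A}_{F_q}).
\]
Writing $\tilde A(t,q)$ for the generating function that weights each $F_q$-enriched rooted tree by $t^v q^A$, with $v$ the number of vertices and $A$ the total area, this combinatorial identity translates to the functional equation
\[
\tilde A(t,q)=t\,\tilde F(\tilde A(t,q),q),
\]
where the factor $t$ accounts for the root vertex, and the substitution $a\mapsto\tilde A(t,q)$ in $\tilde F(a,q)=1+\tilde G(a,1,q)$ reflects the fact that the size of the $F_q$-structure decorating the root (the width of its Ferrers diagram) equals the out-degree of the root, each of those successor vertices being itself the root of an independent $F_q$-enriched rooted tree.

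Specialising to $t=1$ gives $\tilde A(1,q)=\tilde F(\tilde A(1,q),q)$, which by Corollary 7 is also satisfied by $\xi_0(q)$. Since $\tilde F(a,q)=1+O(q)$ as a formal power series in $q$, the fixed-point equation $u=\tilde F(u,q)$ has a unique solution in the ring of formal power series (one can extract its coefficients recursively in the order of $q$), and hence $\xi_0(q)=\tilde A(1,q)$, which is precisely the content of the theorem.

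I do not anticipate any substantive obstacle, since the argument is a direct analogue of that of Theorem \ref{thmA}. The one point worth flagging explicitly is the role of the `augmentation by the empty polyomino' in the definition of $F_q$: this augmentation corresponds precisely to the constant $1$ in $\tilde F(a,q)=1+\tilde G(a,1,q)$ and, combinatorially, to the decoration of leaves (vertices of out-degree zero) by the empty Ferrers diagram, which is exactly what makes the tree recursion terminate.
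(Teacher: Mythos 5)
Your proposal is correct and follows essentially the same route as the paper: apply Lemma 4 to the species $F_q$, translate the combinatorial equation into $\tilde A(t,q)=t\tilde F(\tilde A(t,q),q)$, set $t=1$, and identify the result with $\xi_0(q)$ via Corollary 7. Your explicit remark that the fixed-point equation $u=\tilde F(u,q)$ has a unique formal power series solution is a welcome addition, since the paper leaves this identification step implicit (and in fact contains a small typo, writing $\tilde A(1,q)=1+\tilde F(\tilde A(1,q),q)$ where the $1+$ should not appear).
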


\begin{proof}
The species ${\cal A}_{F_q}$ of $F_q$-enriched rooted trees satisfies ${\cal A}_{F_q}=X\cdot F_q({\cal A}_{F_q})$.
Correspondingly, its generating function $\tilde A(x,q)$ satisfies
\beq
\tilde A(t,q)=t\tilde F(\tilde A(t,q),q)\;,
\eeq
where $x$ is the generating variable for the size of the trees. For $t=1$, this equation reduces to 
$\tilde A(1,q)=1+\tilde F(\tilde A(1,q),q)$, whence one identifies $\xi_0(q)=\tilde A(1,q)$.
\end{proof}

Figure \ref{fig2a} shows an example of an $F_q$-enriched rooted tree. The generating function $\tilde A(t,q)$ is a refinement of $\xi_0(q)$ clearly different from $A(t,q)$. One computes easily
\begin{multline}
\tilde A(t,q)=t+t^2q+(t^3+t^2)q^2+(t^4+2t^3+t^2)q^3+(t^5+3t^4+4t^3+t^2)q^4+(t^6+4t^5+10t^4+5t^3+t^2)q^5\\
+(t^7+5t^6+21t^5+17t^4+7t^3+t^2)q^6+(t^8+6t^7+41t^6+47t^5+29t^4+8t^3+t^2)q^7+\ldots
\end{multline}
For example, the occurrence of the monomial $8t^3q^7$ in $\tilde A(t,q)$ indicates there are eight $F_q$-enriched rooted trees with 3 vertices and total area 7. These are shown in Figure \ref{figexample2}.

\begin{figure}[t]
\begin{center}\includegraphics[width=0.6\textwidth]{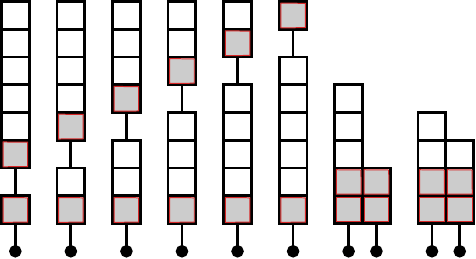}
\begin{minipage}{0.9\textwidth}\caption{All eight $F_q$-enriched rooted trees with 3 vertices and total area 7. 
\label{figexample2}
}\end{minipage}\end{center}
\end{figure}

An immediate consequence of Theorems \ref{thmA} and \ref{thmB} is the following corollary.

\begin{Cor}
$S_q$-enriched rooted trees with fixed total area $A$ and $F_q$-enriched rooted trees with fixed total area $A$ are equinumerous.
\end{Cor}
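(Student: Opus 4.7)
The plan is to derive this corollary as a direct consequence of Theorems \ref{thmA} and \ref{thmB}, with no additional combinatorial construction required. The argument is purely generating-functional: both theorems identify $\xi_0(q)$ with the generating function of a class of enriched rooted trees weighted by total area. Specifically, Theorem \ref{thmA} shows that $\xi_0(q)=\sum_{A\geq 0} s_A q^A$, where $s_A$ is the number of $S_q$-enriched rooted trees of total area $A$, and Theorem \ref{thmB} shows that $\xi_0(q)=\sum_{A\geq 0} f_A q^A$, where $f_A$ is the number of $F_q$-enriched rooted trees of total area $A$. Equating the coefficients of $q^A$ on both sides yields $s_A = f_A$ for every $A\geq 0$, which is the claim.

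Equivalently, one can phrase this in terms of the refined bivariate generating functions $A(t,q)$ and $\tilde A(t,q)$ introduced after the two theorems. Both satisfy fixed-point equations of the form $A(1,q)=F(A(1,q),q)$ and $\tilde A(1,q)=\tilde F(\tilde A(1,q),q)$, and by Corollaries 3 and 7 each specialisation at $t=1$ is identified with $\xi_0(q)$. Hence $A(1,q)=\tilde A(1,q)$, and extraction of the coefficient of $q^A$ again gives the equinumeracy.

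No genuine obstacle arises in this enumerative proof: it is a two-line consequence of what has already been established. The interesting follow-up, which I would flag but not pursue here, is whether there is an explicit bijection between the two families for fixed $A$. This is where the actual difficulty lies, because the two species attach quite different polyomino statistics to the out-degree of each vertex (the rise of a stack polyomino on the one hand, the width of a Ferrers diagram on the other), and the explicit expansions of $A(t,q)$ and $\tilde A(t,q)$ displayed earlier already differ as polynomials in $t$. Any such bijection must therefore alter the underlying tree shape in a nontrivial way, and would most naturally be sought by routing through the common identity $A(1,q)=\tilde A(1,q)=\xi_0(q)$ rather than by a purely local vertex-by-vertex correspondence.
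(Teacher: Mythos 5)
Your proposal is correct and matches the paper exactly: the corollary is stated there as an immediate consequence of Theorems \ref{thmA} and \ref{thmB}, with precisely the coefficient-extraction argument you give (and the paper likewise defers the question of an explicit bijection as an open problem). Nothing further is needed.
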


However, this is just a special case of a much more general result. Given that the tree structures in Lemma 2 and Lemma 4 are related to the iteration of the two functional equations (\ref{root1}) and (\ref{root2}) for $\xi_0(q)$ from Lemma 1, respectively, one can generalise Theorems \ref{thmA} and \ref{thmB}
immediately to an infinite family of equinumerous sets of trees.

To see this, note that $\xi_0(q)$ satisfies not only

\begin{align}
\xi_0(q)&=F(\xi_0(q),q)\;\text{, and}\\
\xi_0(q)&=\tilde F(\xi_0(q),q)\;,
\end{align}
but also
\begin{align}
\xi_0(q)&=F(F(\xi_0(q),q),q)\;,\\
\xi_0(q)&=F(\tilde F(\xi_0(q),q),q)\;,\\
\xi_0(q)&=\tilde F(F(\xi_0(q),q),q)\;,\\
\xi_0(q)&=\tilde F(\tilde F(\xi_0(q),q),q)\;,
\end{align}
and so forth, and iteration of each of these leads to a different combinatorial model. More generally, I obtain Theorem \ref{main}.

\begin{Thm}
\label{main}
Let $F^{(0)}_q(a)=1+G(1,1,a,q)$ be the generating function of stack polyominoes enumerated with respect to rise ($a$) and total area ($q$), and let $F^{(1)}_q(a)=1+\tilde G(a,1,q)$ be the generating function of Ferrers diagrams with $n$-th largest column having length $n$ for some integer $n$, enumerated with respect to width ($a$) and total area ($q$).

Let $\sigma=\{\sigma_0,\ldots,\sigma_N\}\in\{0,1\}^{N+1}$ for $N\geq0$. Then
\beq
\label{enum}
\xi_0^{\sigma}(q)=F^{(\sigma_0)}_q\circ F^{(\sigma_1)}_q\circ\ldots\circ F^{(\sigma_N)}_q(0)
\eeq
enumerates rooted trees of height at most $N$, enriched by $S_q$ at level $i$ if $\sigma_i=0$ 
and enriched by $F_q$ at level $i$ if $\sigma_i=1$, weighted with respect to area (level 0 is the root).

Moreover, given $\sigma\in\{0,1\}^{\mathbb N}$, $\xi_0(q)$ enumerates rooted trees
enriched by $S_q$ at level $i$ if $\sigma_i=0$ and enriched by $F_q$ at level $i$ if $\sigma_i=1$, 
weighted with respect to area. In particular, sets of trees enriched with respect to any $\sigma\in\{0,1\}^{\mathbb N}$ are equinumerous for fixed total area $A$.
\end{Thm}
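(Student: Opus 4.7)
The approach is to establish the finite-$\sigma$ statement by induction on $N$, and then to deduce the infinite-$\sigma$ statement by a truncation argument that exploits the fact, established in Corollaries~3 and~7, that $\xi_0(q)$ is a common fixed point of $F^{(0)}_q$ and $F^{(1)}_q$. For the base case $N=0$, $F^{(\sigma_0)}_q(0)$ is the generating function, by area, of $\sigma_0$-polyominoes of rise (or width) $0$ together with the empty polyomino, which matches exactly the rooted trees of height zero whose single vertex (necessarily of out-degree zero) is enriched at level $0$ by $\sigma_0$. For the inductive step I invoke the species-substitution rule behind Lemma~4: reading $F^{(\sigma_0)}_q$ as a generating function in the marker $a$ for the number of successors of the root, its composition with $\xi_0^{(\sigma_1,\ldots,\sigma_N)}(q)$ attaches to every successor an independent $(\sigma_1,\ldots,\sigma_N)$-enriched subtree of height at most $N-1$, which produces exactly the $\sigma$-enriched rooted trees of height at most $N$.

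For the infinite case, the central ingredient is a $q$-contraction estimate. Inspection of $F^{(0)}_q(a)=1+G(1,1,a,q)$ and $F^{(1)}_q(a)=1+\tilde G(a,1,q)$ shows that $a$ enters each summand only accompanied by at least one factor of $q$ (through $(aq;q)_{n-1}$ in $G$ and through $a^nq^{n^2}$ in $\tilde G$). Writing $F^{(\sigma)}_q(a)=\sum_{k\geq 0}c_k^{(\sigma)}(q)\,a^k$, this forces $c_k^{(\sigma)}(q)\in q\mathbb{Z}[[q]]$ for every $k\geq 1$. Consequently, for any two formal power series $a,b\in\mathbb{Z}[[q]]$ with $a-b\in q^m\mathbb{Z}[[q]]$, one has
\beq
F^{(\sigma)}_q(a)-F^{(\sigma)}_q(b)=(a-b)\sum_{k\geq 1}c_k^{(\sigma)}(q)(a^{k-1}+a^{k-2}b+\cdots+b^{k-1})\in q^{m+1}\mathbb{Z}[[q]]\;.
\eeq
Applying this bound $N+1$ times to the identity $\xi_0(q)=F^{(\sigma_0)}_q\circ\cdots\circ F^{(\sigma_N)}_q(\xi_0(q))$, starting from the trivial agreement $\xi_0(q)\equiv 0\pmod{q^0}$, yields $\xi_0^{\sigma|_{N+1}}(q)\equiv\xi_0(q)\pmod{q^{N+1}}$, where $\sigma|_{N+1}=(\sigma_0,\ldots,\sigma_N)$ denotes the initial segment of $\sigma$.

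To convert this formal congruence into an honest combinatorial enumeration for infinite $\sigma$, I appeal to a vertex-area bound: in any $\sigma$-enriched tree, the out-degree of each vertex $v$ equals the rise (or width) of its polyomino, which is bounded above by the area of that polyomino. Summing over all vertices of a tree of total area $A$ and invoking the elementary identity $V-1=\sum_v\mathrm{outdeg}(v)$ gives $V\leq A+1$, so in particular the tree is finite and of height at most $A$. Therefore, for $N\geq A$ the set of infinite-$\sigma$-enriched trees of area $A$ coincides with the set of $\sigma|_{N+1}$-enriched trees of area $A$, whose cardinality is $[q^A]\xi_0^{\sigma|_{N+1}}(q)=[q^A]\xi_0(q)$ by the preceding congruence; equinumeration across different infinite $\sigma$ is then immediate. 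The main obstacle I anticipate is the contraction estimate together with the bookkeeping of $q$-orders in the iterated composition; once this is clean, the combinatorial half of the argument is routine from the species framework.
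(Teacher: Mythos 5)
Your proof is correct. The finite part coincides with the paper's argument: the paper writes down the nested species expression $X\cdot R^{(\sigma_0)}_q(X\cdot R^{(\sigma_1)}_q(\ldots X\cdot R^{(\sigma_N)}_q(0)\ldots))$ in one step and reads off the trees of height at most $N$, whereas you unfold the same composition as an induction on $N$; these are the same argument in different packaging. The infinite part is where you genuinely depart from (and improve on) the paper. The paper stays entirely on the species side, noting that the restrictions of the truncated species to trees of height at most $N$ coincide for all $M\geq N$ and then ``letting $N\to\infty$''; this shows the limiting family of trees is well defined but leaves implicit why its generating function is $\xi_0(q)$ rather than merely $\lim_M \xi_0^{\sigma|_{M+1}}(q)$. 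Your contraction estimate closes exactly that gap: since $c^{(\sigma)}_k(q)\in q\mathbb{Z}[[q]]$ for $k\geq 1$ (indeed $c^{(\sigma)}_k(q)\in q^k\mathbb{Z}[[q]]$, which is also what makes the termwise manipulation of $\sum_k c^{(\sigma)}_k(q)(a^k-b^k)$ formally convergent), each application of $F^{(\sigma_i)}_q$ gains a power of $q$, and the fixed-point identity $\xi_0(q)=F^{(\sigma_0)}_q\circ\cdots\circ F^{(\sigma_N)}_q(\xi_0(q))$ from Corollaries~3 and~7 then forces $\xi_0^{\sigma|_{N+1}}(q)\equiv\xi_0(q)\pmod{q^{N+1}}$. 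Your bound $V\le A+1$, hence height at most $A$, is the quantitative version of the paper's stabilization claim and is what lets you identify $[q^A]$ for $N\geq A$. In short: same combinatorial skeleton, but your generating-function contraction argument supplies a convergence step the paper only gestures at, at the modest cost of some bookkeeping of $q$-valuations.
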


\begin{proof}
Let $R^{(0)}_q=S_q$ be the species of stack polyominoes augmented by the `empty polyomino', weighted by area ($q$), with size give by the rise, and let $R^{(1)}_q=F_q$ be the species of Ferrers diagrams with $n$-th largest row having length $n$ for some integer $n$, augmented by the `empty polyomino', weighted by area ($q$), with size give by the width. Let $\sigma\in\{0,1\}^{N+1}$. Then the combinatorial
expression
\beq
X\cdot R^{(\sigma_0)}_q(X\cdot R^{(\sigma_1)}_q(\ldots X\cdot R^{(\sigma_N)}_q(0)\ldots))
\eeq
encodes trees of height at most $N$, enriched by $R^{(\sigma_i)}_q$ at level $i$. Hence
\beq
A_{\sigma}(t,q)=tF^{(\sigma_0)}_q(tF^{(\sigma_1)}_q(\ldots tF^{(\sigma_N)}_q(0)\ldots))
\eeq
enumerates these trees with respect to number of vertices ($t$) and total area ($q$).
Letting $t=1$, Eqn.~(\ref{enum}) follows.

Next, let $\sigma\in\{0,1\}^{\mathbb N}$. For any finite $N$, the restrictions of
\beq
X\cdot R^{(\sigma_0)}_q(X\cdot R^{(\sigma_1)}_q(\ldots X\cdot R^{(\sigma_M)}_q(0)\ldots))\;,\quad M\geq N
\eeq
to trees of height at most $N$ coincide. Hence, letting $N\to\infty$, the claim follows.
\end{proof}

\begin{figure}[t]
\begin{center}\includegraphics[width=0.6\textwidth]{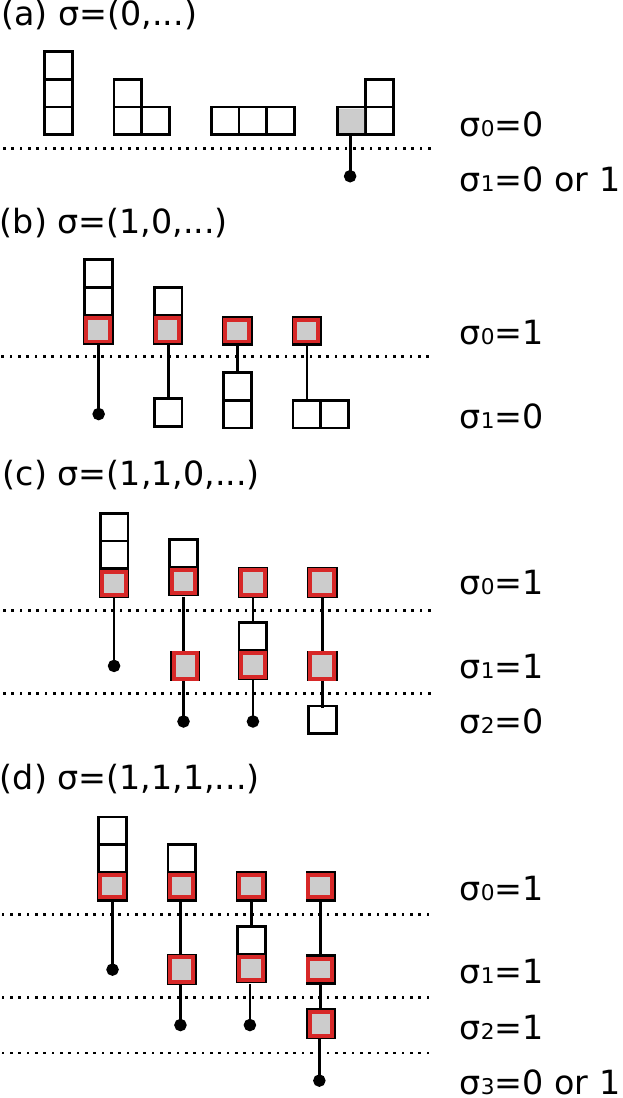}
\begin{minipage}{0.9\textwidth}\caption{Enriched rooted trees with total area 3 for
different choices of $\sigma$: (a) trees enriched with stack polyominoes at level 0, (b) trees enriched with Ferrers diagrams at level 0 and stack polyominoes at level 1,
(c) trees enriched with Ferrers diagrams at levels 0 and 1, and stack polyominoes at level 2, (d) trees enriched with Ferrers diagrams at levels 0, 1, and 2.
\label{mixed}
}\end{minipage}\end{center}
\end{figure}

\clearpage

Figure \ref{mixed} shows enriched rooted trees with total area 3 for different
choices of $\sigma\in\{0,1\}^{\mathbb{N}}$. For each choice of $\sigma$, there are four enriched trees
with total area 3. Note that the number of vertices in the trees differ, corresponding to different expressions for the coefficient polynomials of $q^3$ in $A_{\sigma}(t,q)$:

\begin{itemize}
\item[(a)] $\sigma=(0,\ldots)$ implies
\beq 
A_{\sigma}(t,q)=t+tq+2tq^2+(t^2+3t)q^3+\ldots\;,
\eeq
\item[(b)] $\sigma=(1,0,\ldots)$ implies
\beq 
A_{\sigma}(t,q)=t+t^2q+2t^2q^2+4t^2q^3+\ldots\;,
\eeq
\item[(c)] $\sigma=(1,1,0\ldots)$ implies
\beq 
A_{\sigma}(t,q)=t+t^2q+(t^3+t^2)q^2+(3t^3+t^2)q^3+\ldots\;,
\eeq
and
\item[(d)] $\sigma=(1,1,1,\ldots)$ implies
\beq 
A_{\sigma}(t,q)=t+t^2q+(t^3+t^2)q^2+(t^4+2t^3+t^2)q^3+\ldots\;.
\eeq
\end{itemize}

It would be highly interesting to find direct bijections between trees generated by any two different choices of $\sigma\in\{0,1\}^{\mathbb N}$.

Note that Theorem \ref{main} provides a combinatorial interpretation of the iterations in remarks 4 and 5 after Proposition 3.1 in \cite{Sokal2012}, as these are simply $\xi^{(n)}_0(q)=(F_q^{(0)})^n(0)$ and $\xi^{(n)}_0(q)=(F_q^{(1)})^n(1)=(F_q^{(1)})^{n+1}(0)$, respectively. 

The iteration used in Proposition 3.1 in \cite{Sokal2012} is given
by
\beq
\label{blah}
\xi^{(n)}_0(q)=(F_q^{(1)})^n(1)\;.
\eeq
Starting the iteration with $\xi^{(0)}_0(q)=1$ instead of $0$ implies that $\xi^{(n)}_0(q)=(F_q^{(1)})^n(1)$ counts rooted trees of height at most $n-1$, enriched with stack polyominoes, with the constraint relaxed by allowing enrichment with \emph{arbitrary} stack polyominoes at level $n-1$. Clearly this also immediately generalises to general choices of $\sigma$.

Note that it was natural in the proofs to introduce the generating variable $t$ corresponding to the number of vertices in the trees. It is tempting to ask what the meaning of this variable (or,
indeed, any other generating variable related to the tree structure, e.g.~counting the number of leaves in the tree) would be in a generalised theta function which has $-A_{\sigma}(t,q)$ as its
leading root.

I shall close with a remark on the singularity structure of $\xi_0(x)$. From \cite[Section VII.4]{Flajolet2009} it follows that one expects the presence of a generic square root singularity for $\xi_0(q)$. Indeed a simple, albeit non-rigorous numerical analyis indicates that the $n$-th coefficient of $\xi_0(q)$ grows asymptotically as 
\beq
[q^n]\xi_0(q)\sim A\mu^n n^{-3/2}\quad\mbox{as $n\to\infty$,}
\eeq
where $\mu=3.2336366652450763163646925293871348350211819091413196994020357434\ldots$,
consistent with such a square singularity.

I am grateful to Andrew Rechnitzer and Alan Sokal for useful discussions and comments on an earlier version of this manuscript.

\end{document}